\long\def\symbolfootnote[#1]#2{\begingroup%
\def\thefootnote{\fnsymbol{footnote}}\footnote[#1]{#2}\endgroup}
\def\imod#1{\allowbreak\mkern10mu({\operator@font mod}\,\,#1)}
\renewcommand*\env@matrix[1][*\c@MaxMatrixCols c]{%
  \hskip -\arraycolsep
  \let\@ifnextchar\new@ifnextchar
  \array{#1}}
\newtheorem{mainthm}{Theorem}
\newtheorem{theorem}{Theorem}[section]
\newtheorem{lemma}[theorem]{Lemma}
\newtheorem{proposition}[theorem]{Proposition}
\newtheorem*{theorem*}{Theorem}
\theoremstyle{definition}
\numberwithin{equation}{section}
\newcommand{\ignore}[1]{}
\newcommand{\mynote}[1]{}
\newcommand{\irr}{\operatorname{Irr}}
\newcommand{\Q}{\ensuremath{\mathbb{Q}}}
\newcommand{\GL}{\operatorname{GL}}
\newcommand{\PSL}{\operatorname{PSL}}
\newcommand{\SL}{\operatorname{SL}}
\title[Rational conjugacy classes and rational characters]{Rational conjugacy classes and rational characters for some finite simple groups}
\author[Kaur D.]{Dilpreet Kaur}
\email{dilpreetkaur@iitj.ac.in}
\address{Indian Institute of Technology Jodhpur
N.H. 62, Nagaur Road, Karwar Jodhpur 342030
Rajasthan}
\author[Panja S.]{Saikat Panja}
\email{panjasaikat300@gmail.com}
\address{Indian Statistical Institute Bangalore, Stat Math Unit, 8th Mile, Mysore Rd, RVCE Post, Bengaluru, Karnataka 560059}
\thanks{The first-named author is supported by SERB research grant through MATRICS project MTR/2022/000231. The second-named author is supported by an NBHM postdoctoral fellowship, file number ending at R\&D-II/6746.}
\date{\today}
\subjclass[2020]{20C15, 20D06, 20E45}
\keywords{simple groups, rational valued characters, rational conjugacy classes}
\begin{document}
\setcounter{section}{0}
\begin{abstract}
If $G$ is a finite group, an irreducible complex-valued character $\chi$ is called rational if $\chi(g)$ is rational for all $g\in G$. Also, a conjugacy class $x^G$ is called rational, if for all irreducible complex-valued character $\chi$, the value $\chi(x^G)$ is rational. 
We prove that for $q$, a power of prime, the group $\mathrm{PSL}_2(q)$ has same number of rational characters and rational conjugacy classes. Furthermore, we verify that this equality holds for all finite simple groups whose character tables appear in the \emph{ATLAS of Finite Groups}, except for the Tits group.
\end{abstract}
\maketitle
\section{Introduction}\label{sec:intro}
Throughout this article we will deal with finite groups. 
It is a well known fact that the number of conjugacy classes of a finite group is same as the number of irreducible representations  over $\mathbb{C}$ of the group. 
However, there is no known bijection between the set of conjugacy classes of a group and the set of its irreducible representations in general.

To understand the correspondence between conjugacy classes of a group and its irreducible representations, group theorists study properties of both conjugacy classes and irreducible representations. 
For example, it is known that the number of \emph{real conjugacy classes} (i.e. conjugacy class of an element $g\in G$ such that $g$ is conjugate to $g^{-1}$) of a group is same as number of \emph{real characters} (i.e. characters having all values to be real) of the group over $\mathbb{C}.$ 
 Real irreducible characters are further divided into two classes, orthogonal representations and symplectic representations.
 A real irreducible character is called orthogonal, if its associated representation can be realized over the field $\mathbb{R}$ of real numbers, otherwise, it is called symplectic character. 
 In \cite{Vinroot2020}, it has been proved that for finite simple groups, the number of orthogonal representations is as same as the number of strongly real conjugacy classes.
 An element $g$ of a group $G$ is called \emph{strongly real element} if there exist $h\in G$ such that $h^2=1$ and $hgh^{-1}=g^{-1}.$
 If an element $g$ is strongly real, then its all conjugates are also strongly real elements, and its conjugacy class is called \emph{strongly real conjugacy class}. 
 However, the number of orthogonal representations does not match with number of strongly real conjugacy classes in all groups. The interested reader may see \cite{KaurKulshrestha2015}.

On similar lines, this article is an attempt to understand the connection between rational conjugacy classes and rational valued characters of a group over $\mathbb{C}.$ 
Let $G$ be a group. An element $g \in G$ is called {\it rational} if $g$ is conjugate to $g^k,$ for all $0<k<o(g)$ and $\gcd(k, o(g))=1,$ where $o(g)$ denotes the order of $g.$
An irreducible character $\chi$ is called a \emph{rational} if  $\chi(g)\in\mathbb{Q}$ for all $g\in G.$
In \cite{NavarroTiep2008}, authors show that if a group $G$ has two rational conjugacy classes, then it has two rational valued characters and vice-versa. They also conjecture that a group $G$ has three rational conjugacy classes if and only if it has three rational valued characters. 
In \cite{Rosi2018}, author proves that if a finite group has three rational conjugacy classes, then it has also three rational valued characters. He also give an example of group of order 672 and GAP Id 128, with four irreducible rational valued characters and six rational conjugacy classes. Note that this group, has a nontrivial Center of order $4$ and hence, is not simple.This example shows that the best possible generalization of the Navarro–Tiep theorem is that a group $G$ has three rational conjugacy classes if and only if it has three rational valued characters. 

Since the number of rational conjugacy classes and the number of rational characters need not match in general, it is desirable to find class of groups, for which the numbers match. In this article we explore such groups and focus on some simple groups other than the alternating groups.
More precisely, we show that number of rational conjugacy classes and the number of rational valued characters is same for $\PSL_2(q),$ where $q$  is power of a prime number. In \cite{BhuniaKaurSingh2019}, authors show that number of rational conjugacy classes and the number of rational valued characters is same for the alternating group $A_n$ for all $n\geq 5.$ In \cref{sec:sporadic}, we show that number of rational conjugacy classes does not match with number of rational valued characters for Tits group, which is a finite simple group of order $17,971,200$. 
In view of these observations, we conjecture that in case of finite simple groups of Lie type, the number of rational conjugacy classes is same as the number rational valued characters over $\mathbb{C}$. We aim to prove the following theorem;
\begin{mainthm}\label{mainthm}
    Let $G$ be the projective special linear group $\PSL_2(q)$. Then the number of rational characters and the number of rational conjugacy classes are the same. If we denote the common number by $\mathrm{RC}(G)$, then the following cases occur:
    \begin{enumerate}
        \item $\mathrm{RC}(G)=2$, whenever $q=3^{2m-1}$;
        \item $\mathrm{RC}(G)=3$, whenever (a) $q=2^m,$ or (b) $q\equiv 1\pmod{3}$ and $q\equiv3\pmod{4}$, or (c) $q\equiv 2\pmod{3}$ and $q\equiv1\pmod{4}$;                
        \item $\mathrm{RC}(G)=4$, whenever $q\equiv \pm1,\pm5\pmod{12}$ such that $q\not\equiv\pm1\pmod{24}$;
        \item $\mathrm{RC}(G)=5$, whenever (a) $q=3^{2m}$, or (b) $q\equiv \pm1\pmod{24}$;
        \item $\mathrm{RC}(G)=7$, whenever $q=p^{2m}$ where $q>13$ is odd.
    \end{enumerate}
\end{mainthm}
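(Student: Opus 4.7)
The plan is to exploit the classical description of the conjugacy classes and character table of $G = \PSL_2(q)$, and to track the rational conjugacy classes and rational characters via a uniform torus-by-torus parametrization. I would split into the cases $q$ even (where $\PSL_2(q) = \SL_2(q)$) and $q$ odd, and further subdivide by the residue of $q$ modulo $4$ and $3$. For each $q$ I would write down the conjugacy classes: one identity, one unipotent class if $q$ is even and two if $q$ is odd, together with semisimple classes parametrized by Weyl-orbits in the split torus $T_s$ of order $(q-1)/d$ and the non-split torus $T_{ns}$ of order $(q+1)/d$, where $d = \gcd(2, q-1)$. Paired with this, I would use the character table: the trivial and Steinberg characters; the principal-series characters $\chi_\theta$ of degree $q+1$ indexed by non-trivial $W$-orbits in $\widehat{T_s}$; the discrete-series of degree $q-1$ indexed similarly on $\widehat{T_{ns}}$; and, when one of the tori has even order, the two \emph{small} characters of degree $(q\pm1)/2$ arising from the reducible series at the unique order-$2$ character of that torus.

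The crux is a uniform rationality criterion on each torus. Two semisimple elements of the same maximal torus of $\PSL_2(q)$ are $G$-conjugate iff they are related by the non-trivial Weyl element, i.e.\ $t \sim t^{-1}$; hence a semisimple element of order $n$ is rational iff $(\mathbb{Z}/n)^\times \subseteq \{\pm 1\}$, forcing $n \in \{1,2,3,4,6\}$. Dually, the value $\chi_\theta(g)$ lies in the maximal real subfield $\mathbb{Q}(\zeta_{|\theta|})^+$, which equals $\mathbb{Q}$ exactly when $|\theta| \in \{1,2,3,4,6\}$. This produces a torus-by-torus bijection between non-identity rational semisimple classes and rational irreducible principal/discrete-series characters: each $n \in \{3,4,6\}$ dividing the torus order contributes one of each. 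The case $n = 2$ requires care because the corresponding series splits into the two small characters, whose values on unipotents are $(1 \pm \sqrt{\epsilon q})/2$ with $\epsilon = (-1)^{(q-1)/2}$, and hence are rational iff $q$ is an odd perfect square. The same condition, via the elementary fact that $\mathbb{F}_p^* \subseteq (\mathbb{F}_q^*)^2$ iff $q$ is an even power of the odd characteristic $p$, is precisely the condition under which the two unipotent classes are individually rational; so the two potential $+2$ contributions cancel between the two sides and the equality of the two counts survives.

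The remainder is then a case analysis indexed by the residue of $q$ modulo $24$: for each residue one records which of $2, 3, 4, 6$ divides $(q-1)/d$ or $(q+1)/d$, sums the always-rational identity/trivial and Steinberg contributions, and adds the unipotent/small-character contributions whenever $q$ is an odd square; the distinct totals that arise match the enumerated cases~(1)--(5). The main obstacles I foresee are (i) characteristic-specific bookkeeping, particularly in characteristic~$3$, where the unipotent order $p=3$ collides with the would-be rational semisimple order $n=3$ and explains the separate treatments of $q = 3^{2m-1}$ and $q = 3^{2m}$; and (ii) making the divisibility conditions in the enumerated cases sharp, for example distinguishing $q \equiv 3 \pmod 8$ from $q \equiv 7 \pmod 8$ within $q \equiv 3 \pmod 4$ (and analogously modulo $12$ versus $24$), so that the count of divisors of $(q \pm 1)/d$ lying in $\{2,3,4,6\}$ is exactly the one claimed.
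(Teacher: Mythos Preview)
Your plan is correct and rests on the same ingredients as the paper---the explicit character table of $\PSL_2(q)$ and the Niven-type criterion (the paper's \cref{lem:values-of-trig}, equivalent to your condition $(\mathbb{Z}/n)^\times\subseteq\{\pm1\}$)---but your organization is more structural. The paper counts rational classes and rational characters \emph{separately} in each residue class of $q$ modulo $24$ (with characteristics $2$ and $3$ treated first), and simply observes case by case that the two totals coincide; no bijection is ever articulated. Your torus-by-torus pairing of order-$n$ semisimple classes with order-$n$ torus characters for $n\in\{3,4,6\}$, together with the clean observation that the two unipotent classes and the two half-dimensional characters become rational simultaneously (both exactly when $q$ is an odd square, via $\mathbb{F}_p^\times\subseteq(\mathbb{F}_q^\times)^2$), gives a uniform reason why the counts agree that the paper leaves implicit. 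The residual bookkeeping you anticipate---tracking which of $3,4,6$ divide $(q\pm1)/d$ as $q$ ranges over residues modulo $24$, and isolating the $q=3^m$ cases---is exactly the case split the paper carries out in its Propositions, so the two arguments converge at that stage.
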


The proof of \cref{mainthm} follows from a case-by-case study. In \cref{sec:prep-mat} we review the character table of the group $\PSL_2(q)$, mainly to set our notations.
The \cref{sec:PSL} is devoted to the proof of \cref{mainthm}. Finally in the \cref{sec:sporadic} we list down our findings about the number of rational characters and rational conjugacy classes about the simple groups whose character tables are provided in ATLAS of finite groups \cite{CWCNP2003}, other than the alternating groups and $\PSL_2(q).$ We use the GAP Package \cite{CTblLib1.3.1} for these calculations.
\section{Preparatory material}\label{sec:prep-mat}
We begin with the following result which might be well-known to experts, which relates a rational conjugacy class $C$ of a finite group $G$ to its corresponding column in the character table of $G$. 
We include a proof, which is a replica of \cite{MathStack}, for the sake of completeness.
\begin{lemma}\cite{MathStack}\label{lem:rational-conjugacy-values}
    Let $G$ be a finite group. A conjugacy class $C$ of $G$ is a rational conjugacy class if and only if for all $\chi\in \irr(G)$ one has $\chi(g)\in\mathbb{Q}$ for all $g\in C$ 
\end{lemma}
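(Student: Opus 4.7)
The plan is to exploit the Galois action of $\mathrm{Gal}(\mathbb{Q}(\zeta_n)/\mathbb{Q}) \cong (\mathbb{Z}/n\mathbb{Z})^\times$ on character values, where $n = o(g)$ for $g \in C$. The key classical fact is that for any $\chi \in \irr(G)$ and any $\sigma_k \in \mathrm{Gal}(\mathbb{Q}(\zeta_n)/\mathbb{Q})$ sending $\zeta_n \mapsto \zeta_n^k$, one has $\sigma_k(\chi(g)) = \chi(g^k)$. This follows because $\chi(g)$ is a sum of eigenvalues of $\rho(g)$ (where $\rho$ affords $\chi$), each an $n$-th root of unity, and $\rho(g^k)$ has eigenvalues which are the $k$-th powers of those of $\rho(g)$.

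For the forward direction, I would assume $C$ is rational in the element-theoretic sense: for every $g \in C$ and every $k$ coprime to $n = o(g)$, $g^k \in C$, so $\chi(g^k) = \chi(g)$ for every $\chi \in \irr(G)$. Combining this with $\chi(g^k) = \sigma_k(\chi(g))$ shows that $\chi(g)$ is fixed by every element of $\mathrm{Gal}(\mathbb{Q}(\zeta_n)/\mathbb{Q})$, hence $\chi(g) \in \mathbb{Q}$.

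For the converse, assume $\chi(g) \in \mathbb{Q}$ for all $\chi \in \irr(G)$ and all $g \in C$. Fix $g \in C$ and $k$ coprime to $n$. Then $\chi(g^k) = \sigma_k(\chi(g)) = \chi(g)$ for every $\chi \in \irr(G)$. Since the irreducible characters separate conjugacy classes (by the second orthogonality relation, or equivalently because the character table is an invertible matrix indexed by classes and irreducibles), it follows that $g$ and $g^k$ lie in the same conjugacy class, so $g^k \in C$. Hence $C$ is a rational conjugacy class.

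The only subtlety is making sure the equivalence between the two formulations of rationality for a class is handled cleanly, namely that it suffices to check the condition $g^k \in C$ for a single representative $g$ (which is automatic since $(hgh^{-1})^k = hg^k h^{-1}$), and that the character-theoretic definition given in the abstract, stated only as $\chi(C) \in \mathbb{Q}$, is literally the same statement as $\chi(g) \in \mathbb{Q}$ for all $g \in C$ since $\chi$ is constant on $C$. There is no real obstacle; the entire argument amounts to writing down the Galois-equivariance identity and invoking column-separation of the character table.
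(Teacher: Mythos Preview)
Your proposal is correct and follows essentially the same approach as the paper's proof: both directions hinge on the Galois-equivariance identity $\sigma_k(\chi(g)) = \chi(g^k)$, and the converse is settled via column orthogonality. The only cosmetic differences are that the paper works over $\mathbb{Q}(\zeta_{|G|})$ rather than $\mathbb{Q}(\zeta_{o(g)})$, obtains the equivariance by twisting the matrix representation by the Galois automorphism rather than via the eigenvalue description, and writes out the second orthogonality sum explicitly where you simply invoke that irreducible characters separate classes.
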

\begin{proof}
    Let $\Q(\chi)$ be a finite normal extension of $\Q$ containing the coefficients of $\varphi(g)$ for all $g\in G$, where $\varphi$ is the representation associated with $\chi$. 
    Let $m=|G|$, the order of the group and $\epsilon$ be a primitive $m$-th root of unity. 
    Further let $E$ be a finite normal extension of $\Q$ containing $\Q(\chi)$ and $\Q(\epsilon)$.
    Then there is an injective homomorphism $\psi:\mathbb{Z}_m^\times\longrightarrow \mathrm{Gal}(E,\Q)$ defined by $\psi(a)=\sigma_a$, where $\sigma_a(\epsilon)=\epsilon a$.
    Now define $\varphi_a =\tau_a\circ\varphi $ (where $\tau_a$ is the automorphism of $\GL_m(E)$ induced by applying $\sigma_a$ to the coefficients of each matrix element) and let $\chi_a$ be its character. Then $\chi_a(g)=\chi(g^a)$.

Let $c$ be conjugate to $c^n$ for $(n,m)=1$, so $\chi(c)=\chi(c^n)$. 
Since $\chi(c^n)=\chi_n(c)$, we have that $\chi(c)$ is fixed under the action of $\sigma_n$.
If this is true for every $n$ relatively prime to $m$, then $\chi(c)$ is fixed under the image of $\mathrm{Gal}(\Q(\epsilon),\Q)$. 
Thus $\chi(c)$ is a member of the fixed field of $\mathrm{Gal}(\Q(\epsilon),\Q)$, i.e. $\Q$.

On the other hand, suppose $C$ is rational and for some $\chi$ there's an $c\in C$ and $n$ relatively prime to $m$ for which $\chi(c)\neq \chi(c^n)$.
Then $\chi(c)$ is not fixed under $\sigma_n$, so it is not a member of the fixed field of $\mathrm{Gal}(\Q(\epsilon),\Q)$, a contradiction. 
So $\chi(c)=\chi(c^n)$ for all $\chi\in \irr(G)$, whence
$$\sum\limits_{i}\chi_i(c)\overline{\chi_i(c^n)}=\sum\limits_{i}\chi_i(c)^2=|Z_G(c)|$$
where the summation runs over all irreducible characters, which implies that $c$ is conjugate to $c^n$.
\end{proof}
Now we move to describing the character table of the group $\PSL_2(q)$. Before so, let us fix a few notations. 
Fix $\tau$ to be a generator of $\mathbb{F}_{q^2}^\times$ and fix $\sigma=\tau^{q+1}$, $\tau_0=\tau^{q-1}$.
Let us start with the following matrices of $\PSL_2(q)$;
\begin{align*}
    I=\begin{pmatrix}
        1 & \\ & 1
    \end{pmatrix},
      N=  \begin{pmatrix}
        1 & 1\\ & 1
    \end{pmatrix},
    N'= \begin{pmatrix}
        1 & \xi\\
         & 1
    \end{pmatrix},
\end{align*}
\begin{align*}
S(a)=\begin{pmatrix}
             \sigma^a & \\
             & \sigma^{-a}
         \end{pmatrix}, 
         T(b)=\begin{pmatrix}
            ~  & -1\\
            1 & \tau^b_0+\tau_0^{qb}
         \end{pmatrix},
\end{align*}
where $\xi\in\mathbb{F}_q^\times$ is a non-square (which exists when $q$ is odd).

First, let look at the case when $q$ is a power of $2$.
Then the conjugacy classes of $\PSL_2(q)$ have representatives as
 $I$, $N$, $S(a)$ for $1\leq a\leq \dfrac{q}{2}-1$, and $T(b)$ for $1\leq b\leq \dfrac{q}{2}$. Since there are $q+1$ many conjugacy classes, $\PSL_2(q)$ has $q+1$ irreducible representations up to equivalence. We have the representations 
$\psi_1$ and $\psi_q$ of dimensions $1$ and $q$ respectively. For each 
$1\leq k\leq \frac{q}{2}-1$ we have representations $\psi^{(k)}_{q+1}$ of dimension
$q+1$ and for each $1\leq j\leq \frac{q}{2}$, we have representations
$\psi^{(j)}_{q-1}$ of dimension $q-1$. The description of these representations 
can be found in \cite[pp. 104-105]{GeckMalle20}. We present the character table in \cref{table-psl(2)}. Note that here $\epsilon \in \mathbb{C}$ is a primitive $q-1$ root of unity and $\eta_0 \in \mathbb{C}$ is a primitive $q+1$ root of unity.
\begin{table}[H]
  \centering%
  \begin{tabular}{*{5}{c}}
    \hline
     &    &     & $1\leq a\leq \frac{q}{2}-1$ & $1\leq b\leq \frac{q}{2}$\\
    $x$& $I$ & $N$ & $S(a)$ & $T(b)$ \\
     $|x^G|$ & $1$ & $q^2-1$ & $q(q+1)$ & $q(q-1)$\\
     \hline
    $\psi_1$ & $1$ & $1$ & $1$ & $1$ \\
    $\psi_q$ & $q$ & $\cdot$ & $1$ & $-1$ \\
    $\psi_{q+1}^{(k)}$ & $q+1$ & $1$ & $\epsilon^{ak}+\epsilon^{-ak}$ &  $\cdot$\\
    $\psi_{q-1}^{(j)}$ & $q-1$ & $-1$ &  $\cdot$ & $-\eta_0^{bj}-\eta_0^{-bj}$ \\
    \hline
  \end{tabular}
\caption{Character table of $\PSL_2(2^m)$}\label{table-psl(2)}
\end{table}
Similarly when $q$ is odd, the group $\PSL_2(q)$ has $\frac{q+5}{2}$ conjugacy classes. Hence $\PSL_2(q)$ has $\frac{q+5}{2}$ irreducible representations upto equivalence. These representations are obtained by looking at non-faithful irreducible representations of $\SL_2(q)$.
In \cref{table-psl(q)14} we present the character table of $\PSL_2(q)$ in case $q\equiv 1\pmod{4}$, where we have $k=2,4,\cdots, \frac{q-5}{2}$, $j=2,4,\cdots, \frac{q-1}{2}$, $\omega=\frac{1+\sqrt{q}}{2}$ and $\omega^*=\frac{1-\sqrt{q}}{2}$.

\begin{table}[H]
  \centering%
  \begin{tabular}{*{7}{c}}
    \hline
    &&&& $1\leq a\leq \dfrac{q-5}{4}$ && $1\leq b\leq \dfrac{q-1}{4}$\\
    $x$& $I$ & $N$ & $N'$ & \makecell{$S(a)$} & $S\left(\frac{q-1}{4}\right)$ & \makecell{$T(b)$} \\
    $|x^G|$& $1$ & $\dfrac{q^2-1}{2}$ & $\dfrac{q^2-1}{2}$ & $q(q+1)$ & $\dfrac{q(q+1)}{2}$ & $q(q-1)$ \\
     \hline
    $\psi_1$ & $1$ & $1$ & $1$ & $1$ & $1$ & $1$ \\
    $\psi_q$ & $q$ & $\cdot$ & $\cdot$ & $1$ & $1$ & $-1$ \\
    $\psi_{q+1}^{(k)}$ & $q+1$ & $1$ & $1$ & $\epsilon^{ak}+\epsilon^{-ak}$ & $2\epsilon^{\frac{(q-1)k}{4}}$ & $\cdot$\\
        $\psi_{q-1}^{(j)}$ & $q-1$ & $-1$ & $-1$ & $\cdot$ & $\cdot$ & $-\eta_0^{bj}-\eta_0^{-bj}$ \\
    $\psi_+'$ & $\frac{q+1}{2}$ & $\omega$ & $\omega^*$ & $(-1)^a$ & $(-1)^{\frac{q-1}{4}}$ & $\cdot$ \\
    $\psi_+''$ & $\frac{q+1}{2}$ & $\omega^*$ & $\omega$ & $(-1)^a$ & $(-1)^{\frac{q-1}{4}}$ & $\cdot$ \\
    \hline
  \end{tabular}
  \caption{Character table of $\PSL_2(q)$, $q\equiv1\pmod{4}$}\label{table-psl(q)14}
\end{table}

In \cref{table-psl(q)34} we present the character table of $\PSL_2(q)$ in case $q\equiv 3\pmod{4}$, where we have $k=2,4,\cdots, \frac{q-3}{2}$, $j=2,4,\cdots, \frac{q-3}{2}$, $\omega=\frac{1+\sqrt{-q}}{2}$ and $\omega^*=\frac{1-\sqrt{-q}}{2}$.
\begin{table}[H]
  \centering%
  \begin{tabular}{*{7}{c}}
    \hline
    & & & & $1\leq a\leq \dfrac{q-3}{4}$ & $1\leq b\leq\dfrac{q-3}{4}$ &\\
    $x$& $I$ & $N$ & $N'$ & \makecell{$S(a)$} & \makecell{$T(b)$} & $T\left(\frac{q+1}{4}\right)$ \\
    $|x^G|$& $1$ & $\dfrac{q^2-1}{2}$ & $\dfrac{q^2-1}{2}$ & $q(q+1)$ & ${q(q-1)}$ & $\dfrac{q(q-1)}{2}$ \\
     \hline
    $\psi_1$ & $1$ & $1$ & $1$ & $1$ & $1$ & $1$ \\
    $\psi_q$ & $q$ & $\cdot$ & $\cdot$ & $1$ & $-1$ & $-1$ \\
    $\psi_{q+1}^{(k)}$ & $q+1$ & $1$ & $1$ & $\epsilon^{ak}+\epsilon^{-ak}$ & $\cdot$ & $\cdot$\\
    $\psi_{q-1}^{(j)}$ & $q-1$ & $-1$ & $-1$ & $\cdot$ & $-\eta_0^{bj}-\eta_0^{-bj}$ & $-2\eta_0^{\frac{(q+1)j}{4}}$ \\
    $\psi_-'$ & $\frac{q-1}{2}$ & $-\omega^*$ & $-\omega$ & $\cdot$ & $(-1)^{b+1}$ & $(-1)^{\frac{q+5}{4}}$ \\
    $\psi_-''$ & $\frac{q-1}{2}$ & $-\omega$ & $-\omega^*$ & $\cdot$ & $(-1)^{b+1}$ & $(-1)^{\frac{q+5}{4}}$ \\
    \hline
  \end{tabular}
      \caption{Character table of $\PSL_2(q)$, $q\equiv3\pmod{4}$}\label{table-psl(q)34}
\end{table}

Note that the entries of the character table involve the functions of the form $\cos\theta$, where $\theta$ is a rational multiple of $2\pi$.
Hence to facilitate the determination of being rational, we end this section with the following result on the rational values of the trigonometric functions.
\begin{lemma}\cite[Theorem 1]{PaolilloVincenzi21}\label{lem:values-of-trig}
    If $\theta$ is a rational multiple of $2\pi$, say $\theta = 2r\pi$ for some rational number $r$,
then the only rational values of the sine and cosine functions of $\theta$ are as follows:
\begin{align*}
    \cos(\theta), \sin(\theta) = 0, \pm\dfrac{1}{2}, \pm1.
\end{align*}
\end{lemma}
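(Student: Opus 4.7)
The statement is the classical Niven's theorem, and the cleanest route is through algebraic integers. My plan is to reduce everything to the cosine case and then exploit the fact that a rational algebraic integer must be a rational integer.

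First I would set $\zeta = e^{i\theta} = \cos\theta + i\sin\theta$. Since $\theta = 2r\pi$ with $r = p/q \in \mathbb{Q}$, we have $\zeta^q = 1$, so $\zeta$ is a root of unity and in particular an algebraic integer. Its complex conjugate $\zeta^{-1}$ is also a root of unity, hence an algebraic integer. Consequently $2\cos\theta = \zeta + \zeta^{-1}$ is a sum of algebraic integers, and therefore itself an algebraic integer.

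Next I would invoke the standard fact that the only algebraic integers lying in $\mathbb{Q}$ are the ordinary integers in $\mathbb{Z}$. So if $\cos\theta$ is assumed rational, then $2\cos\theta$ is a rational algebraic integer, hence $2\cos\theta \in \mathbb{Z}$. The elementary bound $|\cos\theta|\le 1$ then forces $2\cos\theta \in \{-2,-1,0,1,2\}$, i.e.\ $\cos\theta \in \{0, \pm \tfrac{1}{2}, \pm 1\}$, which is precisely the list in the statement.

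For the sine case I would simply reduce it to the cosine case via the identity $\sin\theta = \cos\bigl(\tfrac{\pi}{2} - \theta\bigr)$. If $\theta = 2r\pi$ is a rational multiple of $2\pi$, then $\tfrac{\pi}{2} - \theta = 2\pi\bigl(\tfrac{1}{4} - r\bigr)$ is again a rational multiple of $2\pi$, so the conclusion already proved for cosine applies and yields the same list of possible rational values for $\sin\theta$.

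There is no real obstacle here; the only point requiring care is the justification that $\mathbb{Q} \cap \overline{\mathbb{Z}} = \mathbb{Z}$ (the ring of algebraic integers intersects $\mathbb{Q}$ in $\mathbb{Z}$), which I would either quote as a standard result from algebraic number theory or sketch via the observation that the minimal polynomial of a rational algebraic integer over $\mathbb{Q}$ is monic of degree one with integer constant term. Since the paper uses this lemma only as a look-up tool to recognize rationality of character values involving $\cos\bigl(\tfrac{2\pi k}{n}\bigr)$-type expressions, this short algebraic-integer argument is sufficient and self-contained.
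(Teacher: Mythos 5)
Your proof is correct. Note, however, that the paper does not actually prove this lemma: it is quoted verbatim from the cited reference (Paolillo--Vincenzi), whose whole point is to give an \emph{elementary} proof of Niven's theorem using only trigonometric identities, without any algebraic machinery. Your argument is the classical algebraic-number-theoretic route: $\zeta=e^{i\theta}$ is a root of unity, so $2\cos\theta=\zeta+\zeta^{-1}$ is an algebraic integer; a rational algebraic integer lies in $\mathbb{Z}$ (the minimal-polynomial argument you sketch is exactly the right justification, and is the only nontrivial input); the bound $|2\cos\theta|\le 2$ then pins down the five values, and the substitution $\sin\theta=\cos\left(\tfrac{\pi}{2}-\theta\right)$, which stays inside the class of rational multiples of $2\pi$, handles the sine case. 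Each approach buys something: the citation keeps the paper short and the cited proof is accessible without algebraic number theory, while your proof is self-contained, shorter than the elementary one once the fact about rational algebraic integers is granted, and generalizes immediately (e.g.\ to show $2\cos\theta$ algebraic of small degree constrains which $\cos\left(\tfrac{2\pi k}{n}\right)$ can lie in a given number field, which is the spirit in which the paper uses the lemma). For the purposes of this paper either justification is fully adequate.
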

\section{Rational classes and rational characters of $\mathrm{PSL}_2(q)$}\label{sec:PSL}
This section will be divided into two subsections; depending on the parity of the $q$ modulo $2$. We begin with the case when the $q$ is even.
\subsection{When $q\equiv 0\pmod{2}$}
If $S(a)$ is a rational conjugacy class, we need to have $\epsilon^{ak}+\epsilon^{-ak}=2\cos\left(\theta_k\right)$ has to be rational for all $1\leq k\leq \frac{q}{2}-1$, where $$\theta_k=\dfrac{2\pi\cdot ak}{q-1}.$$
Since difference between $\theta_j$ and $\theta_{j+1}$ is $\dfrac{2\pi a}{q-1}$, we must have $a=\dfrac{q-1}{3}$ if it exists.
This happens when $q$ is an {even} power of $2$. 
A somewhat different yet similar scenario occurs while examining the case of $T(b)$.
Indeed it appears that $T(b)$ is a rational conjugacy class when $3|q+1$, which occurs when $2$ is an {odd} power of $2$.

Similarly if $\psi^{(k)}_{q+1}$ is a rational character, we must have $k=\dfrac{q-1}{3}$ when it exists and $\psi_{q-1}^{(j)}$ is a rational character when $j=\dfrac{q+1}{3}$.
Hence we have the following result.
\begin{proposition}\label{prop:2}
    The group $\PSL_2(2^m)$ has $3$ rational character and $3$ rational conjugacy classes. Moreover;
    \begin{enumerate}
        \item the rational conjugacy classes of $\PSL_2(2^m)$ are conjugacy class of $I$, $N$, and either $T\left(\dfrac{q-1}{3}\right)$ when $m$ is even or $S\left(\dfrac{q+1}{3}\right)$ otherwise.
        \item the rational characters of $\PSL_2(2^m)$ are $\psi_1$, $\psi_q$, and either $\psi_{q+1}^{(q-1)/3}$ when $m$ is even or $\psi_{q-1}^{(q+1)/3}$ otherwise.
    \end{enumerate}
\end{proposition}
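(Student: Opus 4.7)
The plan is to apply Lemma 2.1 and then read rationality directly off Table 1, invoking Lemma 2.2 to handle the trigonometric entries. I would first observe that the columns of $I$ and $N$ and the rows of $\psi_1$ and $\psi_q$ consist entirely of integers, so these two classes and two characters are automatically rational. The task therefore reduces to determining which $S(a)$, $T(b)$ are rational classes, and which $\psi_{q+1}^{(k)}$, $\psi_{q-1}^{(j)}$ are rational characters.

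Consider first the column of $S(a)$: its only non-integer entries are $\epsilon^{ak}+\epsilon^{-ak} = 2\cos(2\pi ak/(q-1))$ for $k=1,\ldots,q/2-1$. Let $d$ be the multiplicative order of $\epsilon^a$ in the cyclic group of $(q-1)$-th roots of unity. As $k$ varies, $\epsilon^{ak}$ sweeps out the entire cyclic subgroup of order $d$, so rationality at every $k$ is equivalent to asking that every $d$-th root of unity $\zeta$ satisfies $\zeta+\zeta^{-1}\in\mathbb{Q}$. By Lemma 2.2 this pins $d$ down to $\{1,2,3,4,6\}$. Since $d$ divides $q-1$ and $q-1$ is odd for $q=2^m$, only $d\in\{1,3\}$ remain; the value $d=1$ is excluded because $a\neq 0$, and $d=3$ requires $3\mid q-1$, which holds iff $m$ is even (as $2^m\equiv 1\pmod 3$ exactly then). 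In that case $a=(q-1)/3$ is the unique value in the allowed range. The identical argument applied to the column of $T(b)$, now with $q+1$ in place of $q-1$, shows that $T(b)$ is rational iff $m$ is odd and $b=(q+1)/3$.

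By the symmetric argument applied row-wise, $\psi_{q+1}^{(k)}$ is rational iff the order of $\epsilon^k$ lies in $\{1,3\}$, contributing $\psi_{q+1}^{((q-1)/3)}$ exactly when $m$ is even, and $\psi_{q-1}^{(j)}$ is rational iff $j=(q+1)/3$ when $m$ is odd. Hence, for each parity of $m$, we obtain exactly one extra rational conjugacy class and one extra rational character beyond the two automatic ones, yielding the total of three claimed in the proposition together with the explicit descriptions.

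The main (mild) obstacle is the step that reduces rationality of an entire column (or row) to the single arithmetic condition $d\in\{1,2,3,4,6\}$: Lemma 2.2 is what makes this precise, and the parity constraint on $q\pm 1$ then trims the list to $\{1,3\}$. Once that is in hand, the count and the explicit labeling of the rational classes and characters are pure bookkeeping.
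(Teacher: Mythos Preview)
Your proof is correct and follows essentially the same route as the paper: both read rationality directly off Table~1 via Lemma~2.1 and then invoke Lemma~2.2 to force the relevant root-of-unity order into $\{1,2,3,4,6\}$, which the oddness of $q\pm 1$ trims to $\{1,3\}$. Your phrasing in terms of the order $d$ of $\epsilon^a$ is equivalent to the paper's ``difference of consecutive angles $\theta_k$'' argument, and if anything makes the elimination of $d\in\{2,4,6\}$ more explicit.
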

\subsection{When $q\equiv 1\pmod{2}$} We first discuss results parallel to those discussed in the previous subsection, keeping two cases in mind $q\equiv 3\pmod{4}$ and $q\equiv 1\pmod{4}$.
\subsubsection{When $q\equiv3\pmod{4}$} When $q\equiv 3\pmod{4}$, for $1\leq \dfrac{k}{2}\leq \dfrac{q-3}{4}$ to have $\psi_{q+1}^{(k)}$ as a rational character we must have for all $1\leq a\leq \dfrac{q-3}{4}$, the quantity
\begin{align*}
    \epsilon^{ak}+\epsilon^{-ak}=2\cos\left(\dfrac{2\pi ak}{q-1}\right)
\end{align*}
to be rational.
Letting $\theta_a=\dfrac{2\pi ak}{q-1}$, we see that $\theta_a-\theta_{a-1}=\dfrac{2\pi k}{q-1}$.
By using \cref{lem:rational-conjugacy-values} we see that $\dfrac{2k}{q-1}\in\left\{\dfrac{1}{3},\dfrac{1}{2},\dfrac{2}{3}\right\}$, since $2k\leq q-1$.

Now for $1\leq \dfrac{j}{2}\leq \dfrac{q-3}{4}$ to have $\psi_{q-1}^{(j)}$ to be rational, by the above observation, it is enough to have all $T(b)$ for $1\leq b \leq \dfrac{q-3}{4}$ are rational valued.
By a similar argument as above this happens if and only if $\dfrac{2j}{q+1}\in\left\{\dfrac{1}{2},\dfrac{2}{3},\dfrac{1}{3}\right\}$.

Again $S(a)$ is a rational class if and only if for all $k$ satisfying $1\leq \dfrac{k}{2}\leq \dfrac{q-3}{4}$ the values
\begin{align*}
    \epsilon^{ak}+\epsilon^{-ak}=2\cos\left(\dfrac{2\pi ak}{q-1}\right)
\end{align*}
are rational . Letting $\theta_k=\dfrac{2\pi ak}{q-1}$, we have $\theta_k-\theta_{k-2}=\dfrac{4\pi a}{q-1}$.
Again, by using \cref{lem:values-of-trig}, we have that $\dfrac{4a}{q-1}\in\left\{\dfrac{1}{3},\dfrac{1}{2},\dfrac{2}{3}\right\}$, since $2a<q-1$.

Now for $1\leq b\leq \dfrac{q-3}{4}$ the class $T(b)$ is a rational class if and only if 
\begin{align*}
    \eta_0^{bj}+\eta_0^{-bj}=2\cos\left(\dfrac{2\pi bj}{q+1}\right)
\end{align*}
is rational for all $\theta_j=\dfrac{2\pi bj}{q+1}$, and by a similar argument as above this happens if and only if $\dfrac{4b}{q+1}\in\left\{\dfrac{1}{3},\dfrac{1}{2},\dfrac{2}{3}\right\}$. 

The class $T\left(\dfrac{q+1}{4}\right)$ is rational if and only if for all $j$ the value $$\cos\left(\dfrac{2 \pi }{(q+1)}\cdot\dfrac{(q+1)j}{4}\right)=\cos\left(\dfrac{j\pi}{2}\right)$$ is rational, which is indeed the case.

\subsubsection{When $q\equiv 1\pmod{4}$} Let us quickly describe this case.
As the reasons are almost similar, we mention them without a proof; 
    (a) For $1\leq \dfrac{k}{2}\leq\dfrac{q-5}{4}$, the character $\psi^{(k)}_{q+1}$ is rational if and only if $\dfrac{2k}{q-1}\in\left\{\dfrac{1}{3},\dfrac{1}{2},\dfrac{2}{3}\right\}$, 
    (b) For $1\leq\dfrac{j}{2}\leq\dfrac{q-1}{4}$, the character $\psi^{(j)}_{q-1}$ for $1\leq j\leq \dfrac{q-1}{4}$ is rational if and only if $\dfrac{2j}{q+1}\in\left\{\dfrac{1}{3},\dfrac{1}{2},\dfrac{2}{3}\right\}$,
    (c) The class $S\left(\dfrac{q-1}{4}\right)$ is a rational class since $\cos\left(\dfrac{2\pi}{q-1}\cdot\dfrac{(q-1)k}{4}\right)=\cos\left(\dfrac{\pi k}{4}\right)$ is always rational,
    (d) The class $S(a)$ for $1\leq a\leq \dfrac{q-5}{4}$ is rational if and only if $\dfrac{4a}{q-1}\in\left\{\dfrac{1}{3},\dfrac{1}{2},\dfrac{2}{3}\right\}$,
    (e) The class $T(b)$ for $1\leq b\leq \dfrac{q-1}{4}$ is rational if and only if $\dfrac{4b}{q+1}\in\left\{\dfrac{1}{3},\dfrac{1}{2},\dfrac{2}{3}\right\}$
We first deal with the cases when $q$ is a power of $3$. We have two cases to consider, the first being $q=3^{2\ell}$ and the other being $q=3^{2\ell+1}$. We have the following
\begin{lemma}\label{prop:case-3}
    Let $q=3^t$ for some integer $t\geq 1$. If $t$ is odd then the classes $I$ and $T\left(\dfrac{q+1}{4}\right)$; the characters $\psi_1$ and $\psi_q$ are rational . When $t$ is even the classes  $I$, $N$, $N'$, $S\left(\dfrac{q-1}{4}\right)$, and $S\left(\dfrac{q-1}{8}\right)$; and the characters $\psi_1$, $\psi_q$, $\psi_-',\psi_-''$ and $\psi^{(k)}_{q+1}$ for $k=\dfrac{q-1}{4}$ are rational .
\end{lemma}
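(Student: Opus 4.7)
The plan is to split according to the parity of $t$, invoking the criteria for rationality of rows and columns of the appropriate character table of $\PSL_2(q)$ that were derived in the previous two subsections. The verification then reduces to a handful of divisibility checks for $q = 3^t$ together with rationality of $\sqrt{\pm q}$.

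First I would record the elementary arithmetic facts that the whole proof rests on. Since $3\nmid q$, neither $q-1$ nor $q+1$ is divisible by $3$, so none of the conditions like $2k/(q-1)\in\{1/3,2/3\}$, $4a/(q-1)\in\{1/3,2/3\}$, or their analogues with $q+1$ in the denominator can be met by an integer in the allowed range. An easy induction gives $3^t\equiv 3\pmod{8}$ when $t$ is odd and $3^t\equiv 1\pmod{8}$ when $t$ is even. Hence for $t$ odd we are in the regime $q\equiv 3\pmod 4$ of \cref{table-psl(q)34}, with $8\nmid q-1$ and $8\nmid q+1$; for $t$ even we are in the regime $q\equiv 1\pmod 4$ of \cref{table-psl(q)14}, with $8\mid q-1$ but $8\nmid q+1$. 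Finally, $\sqrt{q}\in\Q$ exactly when $t$ is even, while $\sqrt{-q}$ is always irrational.

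With these in hand, the case $t$ odd is a row-and-column sweep of \cref{table-psl(q)34}. The arithmetic observations kill every $\psi_{q+1}^{(k)}$, every $\psi_{q-1}^{(j)}$, every $S(a)$, and every $T(b)$ with $b\neq (q+1)/4$; the exceptional class $T((q+1)/4)$ is rational for free, as already shown. Because $\sqrt{-q}$ is irrational, the values $\omega,\omega^*$ are irrational, so $\psi_-'$ and $\psi_-''$ fail to be rational characters, and the same $\omega,\omega^*$ appearing in the $N$ and $N'$ columns prevent those from being rational classes by \cref{lem:rational-conjugacy-values}. This leaves exactly the classes $I$ and $T((q+1)/4)$ and the characters $\psi_1$ and $\psi_q$. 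The case $t$ even is handled analogously using \cref{table-psl(q)14}: the class $S((q-1)/4)$ is rational for free, the criterion $4a/(q-1)=1/2$ combined with $8\mid q-1$ singles out precisely $S((q-1)/8)$, and every other $S(a)$ and every $T(b)$ is eliminated by the divisibility facts. Among the indexed characters, only $\psi_{q+1}^{(k)}$ with $k=(q-1)/4$ survives, and one checks that this $k$ is even and lies in the allowed range as soon as $q\geq 9$. Because $\sqrt{q}=3^{\ell}\in\Q$, the values $\omega=(1+\sqrt{q})/2$ and $\omega^*$ are rational, so both $\psi_+'$ and $\psi_+''$ are rational characters and both $N$ and $N'$ are rational classes.

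The main obstacle is not conceptual but book-keeping: one must be careful that the indices $k,j$ are restricted to even values and that the candidates produced by the criteria fall inside the indicated parameter ranges. Apart from this, the argument is a mechanical application of \cref{lem:rational-conjugacy-values}, \cref{lem:values-of-trig}, and the rationality criteria assembled just above the lemma.
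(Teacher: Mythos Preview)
Your proposal is correct and mirrors the paper's proof: both split on the parity of $t$, invoke the rationality criteria assembled just above the lemma, and reduce everything to checking whether $3$ or $8$ divide $q\pm1$ together with the (ir)rationality of $\sqrt{\pm q}$. One trivial slip to fix: you wrote ``Since $3\nmid q$'', but of course $3\mid q$; the intended conclusion that $3\nmid q\pm1$ is correct for exactly that reason.
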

\begin{proof}
    First let $q=3^{2\ell+1}$ for some integer $\ell\geq 0$, in which case $q\equiv 3\pmod{4}$. 
    Clearly the classes $N$ and $N'$ are not rational .
    By the discussion above we have the classes $I$ and $T\left(\dfrac{q+1}{4}\right)$ to be rational . 
    So, if any other rational classes exists, it has to be of the form $S(a)$ or $T(b)$.
    Now if such an $a$ exists, we must have $q\in\{6a+1,8a+1,12a+1\}$.
    This can never happen and hence no such $S(a)$ is a rational class. Similar argument works for $T(b)$ as well.
    Let us now quickly see the cases for rational characters. 
    We have that $\psi_1$ and $\psi_q$ are rational characters, and $\psi_+', \psi_{+}''$ are non-rational characters.
    If $\psi^{(k)}_{q+1}$ is a rational character for some $k$ then $q\in\{6k+1,4k+1,3k+1\}$ which is never possible. Similarly no $\psi_{q-1}^{(j)}$ is a rational character.
    
    Next let $q=3^{2\ell}$ for some integer $\ell\geq 1$. 
    In this case $q\equiv 1\pmod{4}$.
    We have the classes of $I$, $N$, $N'$ and $S\left(\dfrac{q-1}{4}\right)$ to be rational , and the characters 
    $\psi_1$, $\psi_q$, $\psi_-'$, $\psi_-''$ to be rational .
    If any $S(a)$ is rational  then $q\in\left\{6a+1,8a+1,12a+1\right\}$. Such a unique $a$ exists; $a=\dfrac{q-1}{8}$, and this also implies that $\psi^{(k)}_{q+1}$ is also rational  for $\dfrac{k}{2}=\cdot\dfrac{q-1}{8}$. 
    Furthermore, since $q\equiv 1\pmod{4}$, none of the classes $T(b)$ and the characters $\psi_{q-1}^{(j)}$ are rational . This finishes the proof.
\end{proof}
Now we are ready to prove the main theorem of the article.
 We have the following observations:
\color{black}
Using the observations above we prove a couple of propositions which will lead us to a proof of the main theorem.
\color{black}
\begin{proposition}\label{prop:evenpower}
    Let $p>3$ be a prime, and $q=p^{2\ell}\geq 13$. Then the group $\PSL_2(q)$ has seven rational conjugacy class and seven rational characters.
\end{proposition}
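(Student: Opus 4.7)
The plan is to apply the criteria (a)--(e) established just above to \cref{table-psl(q)14}, supplemented by one observation enabled by the hypothesis that $q$ is a perfect square. First I record the congruences satisfied by $q = p^{2\ell}$ with $p > 3$ prime: since $p$ is odd, $p^{2\ell}$ is an odd square, so $q \equiv 1 \pmod 8$, and since $p \neq 3$, one has $p^{2\ell} \equiv 1 \pmod 3$. Together these give $q \equiv 1 \pmod{24}$, and in particular $q \equiv 1 \pmod 4$, so \cref{table-psl(q)14} is the relevant character table.

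The crucial new ingredient is that $\sqrt{q} = p^\ell$ is an integer, making $\omega = (1+\sqrt{q})/2$ and $\omega^* = (1-\sqrt{q})/2$ rational. Inspecting the $N$- and $N'$-columns of \cref{table-psl(q)14}, every entry is then rational, so $N$ and $N'$ become rational conjugacy classes. Dually, scanning the rows of $\psi_+'$ and $\psi_+''$, every entry is rational, so these two characters become rational. Combined with the always-rational items $I$, $S((q-1)/4)$, $\psi_1$, and $\psi_q$, we already have four rational classes and four rational characters at no extra cost.

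To locate the remaining rational items, I apply criteria (d) and (a) using $24 \mid q-1$: the candidates $a \in \{(q-1)/12,\,(q-1)/8,\,(q-1)/6\}$ are positive integers (since $24 \mid q-1$) lying in $[1,(q-5)/4]$ exactly when $q \geq 13$, and the candidates $k \in \{(q-1)/6,\,(q-1)/4,\,(q-1)/3\}$ are even integers in $[2,(q-5)/2]$ under the same condition, the parities being automatic from $24 \mid q-1$. This yields three more rational classes $S(a)$ and three more rational characters $\psi^{(k)}_{q+1}$. Finally, since $q+1 \equiv 2 \pmod{24}$, none of $3$, $4$, $6$ divides $q+1$, so criteria (e) and (b) produce no rational $T(b)$ and no rational $\psi^{(j)}_{q-1}$. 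Summing yields $4 + 3 = 7$ items in each list.

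The argument is essentially a bookkeeping exercise; I anticipate no conceptual obstacle. The only delicate point is the sharp use of $q \geq 13$, which is exactly what is needed to guarantee simultaneously $(q-1)/12 \geq 1$ and $(q-1)/3 \leq (q-5)/2$, i.e., to place both the smallest and the largest candidate index in the valid range. The conceptual takeaway is that $q$ being a perfect square contributes two additional rational classes ($N$ and $N'$) and two additional rational characters ($\psi_+'$ and $\psi_+''$) on top of the five items that one would get from $q \equiv 1 \pmod{24}$ alone, and this is precisely the discrepancy that distinguishes case (5) from case (4)(b) of \cref{mainthm}.
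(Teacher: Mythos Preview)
Your proof is correct and follows essentially the same approach as the paper's: identify the four ``automatic'' rational items $I$, $N$, $N'$, $S((q-1)/4)$ (resp.\ $\psi_1$, $\psi_q$, $\psi_+'$, $\psi_+''$), then use the divisibility $24\mid q-1$ together with criteria (a), (d) to pick up three more $S(a)$-classes and three more $\psi_{q+1}^{(k)}$-characters, and finally rule out any $T(b)$ or $\psi_{q-1}^{(j)}$ via the congruences on $q+1$. Your write-up is in fact slightly more explicit than the paper's in two places: you spell out \emph{why} $N$, $N'$, $\psi_+'$, $\psi_+''$ are rational (namely that $\sqrt{q}=p^\ell\in\mathbb{Z}$ forces $\omega,\omega^*\in\mathbb{Q}$), and you verify the index-range inequalities that make the hypothesis $q\geq 13$ sharp --- both points the paper leaves to the reader.
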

\begin{proof}
    Note that in this case $q\equiv 1\pmod{4}$. 
    Hence the classes given by $I$, $N$, $N'$ and $S\left(\dfrac{q-1}{4}\right)$ are rational , whereas the characters given by $\psi_1$, $\psi_q$, $\psi_+'$ and $\psi_+''$ are rational .
    It remains to prove that there are exactly three more rational classes and three more rational characters.
    {When $q=p^{2\ell}$ and $q\not\equiv 0\pmod{3}$, we have that
    \begin{align*}
        q-1\equiv0\pmod{4},\qquad q-1\equiv 0\pmod{3},\qquad\text{and}\qquad q-1\equiv0\pmod{8}.
    \end{align*}
    Hence if we choose $a\in\left\{\dfrac{q-1}{6},\dfrac{q-1}{8},\dfrac{q-1}{12}\right\}$ then the classes $S(a)$ are rational .}

    Furthermore for $\dfrac{k}{2}\in\left\{\dfrac{q-1}{12},\dfrac{q-1}{8},\dfrac{q-1}{6}\right\}$ the characters $\psi_{q+1}^{(k)}$ are rational . Note that $q+1\not\equiv 0\pmod{3}$ and $q+1\not\equiv 0\pmod{4}$, which implies that none of the other classes are rational , and the same applies for othe characters as well.
\end{proof}
\begin{proposition}\label{q+-1-24}
            Let $q$ be an odd power of prime. Then the group $\PSL_2(q)$ has five rational classes and five rational characters, whenever $q\equiv \pm1\pmod{24}$.
\end{proposition}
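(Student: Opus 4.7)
The plan is to split into the two residues $q\equiv 1\pmod{24}$ and $q\equiv -1\pmod{24}$, and in each case enumerate the rational classes and the rational characters using the divisibility criteria derived in Sections 3.2.1 and 3.2.2, together with the fact that $q$ being an odd power of a prime forces $\sqrt q$ and $\sqrt{-q}$ to be irrational (so the classes $N,N'$ and the characters $\psi_\pm',\psi_\pm''$ never contribute).

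First I would record the arithmetic. If $q\equiv 1\pmod{24}$, then $q\equiv 1\pmod 4$ and $24\mid q-1$, while $q+1\equiv 2\pmod{24}$ is coprime to $3$ and not divisible by $4$. Hence among the candidate values $(q-1)/12$, $(q-1)/8$, $(q-1)/6$ (for $S(a)$) all three are integers, while none of $(q+1)/12$, $(q+1)/8$, $(q+1)/6$ (for $T(b)$) are. The bound $a\le (q-5)/4$ is satisfied by all three choices once $q\ge 13$. Applying the criteria (d) and (e) from Section 3.2.2, the rational classes are exactly
\[
I,\quad S\!\left(\tfrac{q-1}{4}\right),\quad S\!\left(\tfrac{q-1}{6}\right),\quad S\!\left(\tfrac{q-1}{8}\right),\quad S\!\left(\tfrac{q-1}{12}\right),
\]
giving five; the classes $N,N'$ are excluded because $\omega,\omega^*$ contain $\sqrt q$. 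By the dual criteria (a) and (b), the rational characters come from $\psi_1,\psi_q$ together with $\psi_{q+1}^{(k)}$ for $k/2\in\{(q-1)/12,(q-1)/8,(q-1)/6\}$, which one checks have $k$ even as required, while no $\psi_{q-1}^{(j)}$ is rational for the same parity reason as the $T(b)$.

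Next I would do the symmetric case $q\equiv -1\pmod{24}$, where $q\equiv 3\pmod 4$ and $24\mid q+1$. The roles of $q-1$ and $q+1$ swap: the $S(a)$'s yield no rational class and the $\psi_{q+1}^{(k)}$'s yield no rational character, while
\[
I,\quad T\!\left(\tfrac{q+1}{4}\right),\quad T\!\left(\tfrac{q+1}{6}\right),\quad T\!\left(\tfrac{q+1}{8}\right),\quad T\!\left(\tfrac{q+1}{12}\right)
\]
are rational classes (bounds $b\le (q-3)/4$ being satisfied for $q\ge 23$), and $\psi_1,\psi_q,\psi_{q-1}^{((q+1)/6)},\psi_{q-1}^{((q+1)/4)},\psi_{q-1}^{((q+1)/3)}$ are rational characters. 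The classes $N,N'$ and the characters $\psi_-',\psi_-''$ are non-rational since $\sqrt{-q}$ is irrational (because $q$ is an odd power of a prime).

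The proof is essentially a bookkeeping exercise once the two congruence profiles are in hand; the only non-routine step to be careful about is checking the parity condition on the indices $k$ and $j$ (they must be even in the character tables for odd $q$) and verifying the index bounds, which is why the hypothesis $q\equiv\pm 1\pmod{24}$ delivers exactly three extra values on one side and zero on the other. No genuine obstacle appears beyond this, and in both cases the count yields five rational classes and five rational characters.
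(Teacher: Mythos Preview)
Your proposal is correct and follows essentially the same approach as the paper: split into the two residue classes modulo $24$, use the divisibility criteria from \S3.2 to identify exactly three additional rational $S$-classes (resp.\ $T$-classes) and three additional $\psi_{q+1}^{(k)}$ (resp.\ $\psi_{q-1}^{(j)}$) beyond $I,\psi_1,\psi_q$ and the ``boundary'' class $S((q-1)/4)$ (resp.\ $T((q+1)/4)$). You are in fact slightly more careful than the paper in two respects: you explicitly rule out $N,N'$ and $\psi_\pm',\psi_\pm''$ by invoking the odd-power hypothesis (so $\sqrt{\pm q}\notin\mathbb{Q}$), and you note the index-range and parity checks, both of which the paper leaves implicit.
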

\begin{proof}
    We have two cases to consider. Firstly consider $q\equiv 1\pmod{24}$, which also implies that $q\equiv 1\pmod{4}$, the classes $I$ and $S\left(\dfrac{q-1}{4}\right)$ are rational; whereas the characters $\psi_1$ and $\psi_q$ are rational.
    
    Note that none of the classes $T(b)$ are rational, as that would imply $q\in\{12b-1,8b-1,6b-1\}$ and also none of the characters $\psi^{(j)}_{q-1}$ are rational.
    Since $q\equiv 1\pmod{24}$, the quantities $\dfrac{q-1}{12}$, $\dfrac{q-1}{8}$, $\dfrac{q-1}{6}$ are integer, and hence the classes $S\left(\dfrac{q-1}{12}\right)$, $S\left(\dfrac{q-1}{8}\right)$ and, $S\left(\dfrac{q-1}{6}\right)$ rational.
    A similar argument implies that the characters $\psi_{q+1}^{(k)}$ for $k\in\left\{\dfrac{q-1}{6},\dfrac{q-1}{4},\dfrac{q-1}{3}\right\}$ are rational.

    Next let consider $q\equiv -1\pmod{24}$ ,which also implies that $q\equiv 3\pmod{4}$). Hence the classes $I$ and $T\left(\dfrac{q+1}{4}\right)$; and the characters $\psi_1$ and $\psi_q$ are rational.
    
    Hence none of the classes $S(a)$ are rational, as that would imply $q\in\{12a+1,8a+1,6a+1\}$ and also none of the characters $\psi^{(k)}_{q+1}$ are rational.
    Since $q\equiv -1\pmod{24}$, the quantities $\dfrac{q+1}{12}$, $\dfrac{q+1}{8}$, $\dfrac{q+1}{6}$ are integer, and hence the classes $S\left(\dfrac{q+1}{12}\right)$, $S\left(\dfrac{q+1}{8}\right)$ and, $S\left(\dfrac{q+1}{6}\right)$ rational.
    A similar argument implies that the characters $\psi_{q-1}^{(j)}$ for $k\in\left\{\dfrac{q+1}{6},\dfrac{q+1}{4},\dfrac{q+1}{3}\right\}$ are rational.
\end{proof}
\begin{proposition}\label{q+-1-12}
            Let $q$ be an odd power of an odd prime such that $q\not\equiv\pm1\pmod{24}$. Then the group $\PSL_2(q)$ has four rational classes and four rational characters, whenever $q\equiv \pm1\pmod{12}$.
\end{proposition}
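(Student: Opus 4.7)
The hypotheses partition into two congruence classes modulo $24$: either $q\equiv 13\pmod{24}$ (coming from $q\equiv 1\pmod{12}$, $q\not\equiv 1\pmod{24}$) or $q\equiv 11\pmod{24}$ (coming from $q\equiv -1\pmod{12}$, $q\not\equiv -1\pmod{24}$). I would handle the two cases separately using the appropriate character table, \cref{table-psl(q)14} for the first (since $q\equiv 1\pmod 4$) and \cref{table-psl(q)34} for the second (since $q\equiv 3\pmod 4$). Since $q$ is an odd power of an odd prime, $\sqrt{q}$ (respectively $\sqrt{-q}$) is irrational, so the exceptional characters $\psi_+',\psi_+''$ (respectively $\psi_-',\psi_-''$) and the classes of $N,N'$ are never rational. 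Hence one can directly appeal to the criteria established earlier in the section: a class $S(a)$ (resp.\ $T(b)$) is rational iff $\tfrac{4a}{q-1}\in\{\tfrac13,\tfrac12,\tfrac23\}$ (resp.\ $\tfrac{4b}{q+1}\in\{\tfrac13,\tfrac12,\tfrac23\}$), with analogous criteria for $\psi_{q+1}^{(k)}$ and $\psi_{q-1}^{(j)}$.

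In the case $q\equiv 13\pmod{24}$ one computes $q-1\equiv 12\pmod{24}$ and $q+1\equiv 14\pmod{24}$. The first observation makes $(q-1)/6$ and $(q-1)/12$ integers while leaving $(q-1)/8$ non-integer; the second prevents any of $(q+1)/6,(q+1)/8,(q+1)/12$ from being integral. Together with the always-rational class $S\bigl((q-1)/4\bigr)$, this yields the four rational classes $I$, $S\bigl(\tfrac{q-1}{4}\bigr)$, $S\bigl(\tfrac{q-1}{6}\bigr)$, $S\bigl(\tfrac{q-1}{12}\bigr)$. On the character side the analogous candidates for $\psi_{q+1}^{(k)}$ are $k\in\{\tfrac{q-1}{6},\tfrac{q-1}{4},\tfrac{q-1}{3}\}$; writing $q-1=24m+12$ one checks that $(q-1)/6=4m+2$ and $(q-1)/3=8m+4$ are even (hence admissible indices within the range $[2,(q-5)/2]$) while $(q-1)/4=6m+3$ is odd and must be discarded, producing exactly the four rational characters $\psi_1,\psi_q,\psi_{q+1}^{((q-1)/6)},\psi_{q+1}^{((q-1)/3)}$.

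The case $q\equiv 11\pmod{24}$ is entirely dual: now $q-1\equiv 10\pmod{24}$ rules out every candidate on the $S$- and $\psi_{q+1}^{(k)}$-sides, while $q+1\equiv 12\pmod{24}$ produces the parallel counts on the $T$- and $\psi_{q-1}^{(j)}$-sides, giving the rational classes $I$, $T\bigl(\tfrac{q+1}{4}\bigr)$, $T\bigl(\tfrac{q+1}{6}\bigr)$, $T\bigl(\tfrac{q+1}{12}\bigr)$ and the rational characters $\psi_1,\psi_q,\psi_{q-1}^{((q+1)/6)},\psi_{q-1}^{((q+1)/3)}$. The main (though modest) obstacle throughout is the bookkeeping of two simultaneous side-conditions on each candidate index: integrality (determined by the divisibility of $q\pm 1$) and, for the characters, the parity condition that $k$ and $j$ be even. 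In each case the ``$\tfrac12$'' candidate fails exactly one of these tests, which is precisely what brings the final count down from a possible five to the claimed four, and also explains the contrast with \cref{q+-1-24}.
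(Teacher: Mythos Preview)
Your proof is correct and follows essentially the same approach as the paper's: reduce to the two residues modulo $24$, use the criteria derived earlier in the section for $S(a)$, $T(b)$, $\psi_{q+1}^{(k)}$, $\psi_{q-1}^{(j)}$, and check divisibility of $q\pm 1$ by $6,8,12$. Your bookkeeping is in fact more careful than the paper's, since you explicitly verify the parity condition on $k$ (resp.\ $j$) that rules out the ``$\tfrac12$'' candidate $(q\mp 1)/4$, a point the paper's proof passes over in silence.
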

\begin{proof}
First let us assume $q\equiv 1\pmod{12}$, which automatically implies that $q\equiv 1\pmod{4}$. Hence the classes $I$, $S\left(\dfrac{q-1}{4}\right)$; and the characters $\psi_1$, $\psi_q$ are rational. The classes $S\left(\dfrac{q-1}{12}\right)$, $S\left(\dfrac{q-1}{6}\right)$ are rational, and the characters $\psi_{q+1}^{(k)}$ for $k=\dfrac{q-1}{6},\dfrac{q-1}{3}$ are rational. Note that $8\nmid q-1$, since $q-1\neq 0\pmod{24}$. No other characters and classes are rational.

We quickly note down the real class and real character for the case $q\equiv-1\pmod{24}$. In this case the classes $I$, $T\left(\dfrac{q+1}{4}\right)$, $T\left(\dfrac{q+1}{6}\right)$, and $T\left(\dfrac{q+1}{12}\right)$ are real. Furthermore in this case, the characters $\psi_1$, $\psi_q$ and $\psi_{q-1}^{(j)}$ for $j=\dfrac{q+1}{6}, \dfrac{q+1}{3}$ are rational.
\end{proof}
\begin{proposition}\label{q+-5-12}
    Let $q$ be an odd power of an odd prime, such that $q\not\equiv\pm1\pmod{24}$. Then the group $\PSL_2(q)$ has four rational classes and four rational characters, when $q\equiv \pm5\pmod{12}$.
\end{proposition}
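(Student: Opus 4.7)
The plan is to parallel the proof of \cref{q+-1-12} by splitting the hypothesis $q\equiv\pm 5\pmod{12}$ into the two subcases $q\equiv 5\pmod{12}$ (where $q\equiv 1\pmod 4$, so that \cref{table-psl(q)14} governs) and $q\equiv 7\pmod{12}$ (where $q\equiv 3\pmod 4$, so that \cref{table-psl(q)34} governs). In each subcase I aim to exhibit exactly four rational conjugacy classes and, by a parallel computation, exactly four rational characters; the common count is then confirmed via \cref{lem:rational-conjugacy-values}.

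For the first subcase $q\equiv 5\pmod{12}$, the unconditional contributions are $I$ and $S\!\left(\tfrac{q-1}{4}\right)$ on the class side and $\psi_1,\psi_q$ on the character side. The congruence forces $q\equiv 2\pmod 3$, hence $6\mid q+1$; by the criterion derived in Subsection 3.2 this promotes $T\!\left(\tfrac{q+1}{6}\right)$ to a rational class, and correspondingly the character $\psi_{q-1}^{(j)}$ with $j=(q+1)/3$ (which is even and within the admissible range of \cref{table-psl(q)14}) becomes rational. The remaining rational class and character come from the auxiliary divisibility $8\mid q-1$ allowed by $q\not\equiv\pm 1\pmod{24}$: this yields $S\!\left(\tfrac{q-1}{8}\right)$ and $\psi_{q+1}^{(k)}$ with $k=(q-1)/4$. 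I would then verify that no extraneous candidates survive by ruling out the integrality of $(q-1)/12,\,(q-1)/6,\,(q+1)/12,\,(q+1)/8$ under $q\equiv 5\pmod{12}$.

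The second subcase $q\equiv 7\pmod{12}$ is handled symmetrically, with the roles of $S$ and $T$ interchanged using \cref{table-psl(q)34}: the baseline rational classes and characters are $I,\,T\!\left(\tfrac{q+1}{4}\right),\,\psi_1,\,\psi_q$; the relation $6\mid q-1$ contributes $S\!\left(\tfrac{q-1}{6}\right)$ and $\psi_{q+1}^{(k)}$ with $k=(q-1)/3$; and the divisibility $8\mid q+1$ contributes $T\!\left(\tfrac{q+1}{8}\right)$ and $\psi_{q-1}^{(j)}$ with $j=(q+1)/4$. Once again all other candidate fractions fail to produce integer indices in the admissible range.

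The principal technical obstacle is the bookkeeping across the residue classes modulo $24$ covered by the hypothesis: at every step one must track (i) integrality of each candidate index $(q\pm 1)/d$ for $d\in\{6,8,12\}$, (ii) whether the index lies in the range prescribed by the relevant character table, and (iii) the evenness requirement on the character indices $k$ and $j$ in the parametrization of $\psi^{(k)}_{q+1}$ and $\psi^{(j)}_{q-1}$. After these checks the lists of rational classes and rational characters coincide in cardinality, and the proof concludes by invoking \cref{lem:rational-conjugacy-values}.
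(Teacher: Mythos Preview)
Your proposal is essentially the same as the paper's own argument: you split into the two subcases $q\equiv 5\pmod{12}$ and $q\equiv 7\pmod{12}$, and in each you list exactly the same four rational classes and four rational characters that the paper lists (namely $I,\,S((q-1)/4),\,T((q+1)/6),\,S((q-1)/8)$ with $\psi_1,\psi_q,\psi_{q+1}^{((q-1)/4)},\psi_{q-1}^{((q+1)/3)}$ in the first case, and the symmetric list in the second). One small remark: your sentence that the divisibility $8\mid q-1$ is ``allowed by $q\not\equiv\pm1\pmod{24}$'' is not a valid deduction, since $q\equiv\pm5\pmod{12}$ already forces $q\not\equiv\pm1\pmod{24}$, so that extra hypothesis is vacuous here and cannot by itself guarantee $8\mid q\mp1$; the paper simply asserts the same divisibility without comment, so your write-up matches the paper's level of justification.
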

\begin{proof}
    When $q\equiv 5\pmod{12}$, we have $q\equiv1\pmod{4}$ and hence the classes $I$, $S\left(\dfrac{q-1}{4}\right)$ are real. Moreover the class $T\left(\dfrac{q+1}{6}\right)$ and $S\left(\dfrac{q-1}{8}\right)$ are real. 
    In this case the characters $\psi_1$, $\psi_q$, $\psi_{q+1}^{(k)}$ for $k=\dfrac{q-1}{4}$ and the character $\psi^{(j)}_{q-1}$ for $j=\dfrac{q+1}{3}$ are rational.

    Since the proof is similar, we note down the real classes and real characters in the case $q\equiv -5\pmod{12}$. The real classes are given by $I$, $T\left(\dfrac{q+1}{4}\right)$, $S\left(\dfrac{q-1}{6}\right)$ and $T\left(\dfrac{q+1}{8}\right)$. The characters given by $\psi_1$, $\psi_q$, $\psi^{(k)}_{q+1}$ for $k=\dfrac{q-1}{3}$ and the character $\psi_{q-1}^{(j)}$ for $j=\dfrac{q+1}{4}$ are rational.
\end{proof}
\begin{proposition}\label{q-1-3-4}
            Let $q$ be an odd power of an odd prime such that $q\equiv\pm7\pmod{24}$. Then the group $\PSL_2(q)$ has three rational classes and three rational characters.
\end{proposition}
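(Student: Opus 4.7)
The strategy parallels the preceding three propositions: translate the congruence $q \equiv \pm 7 \pmod{24}$ into divisibility conditions on $q-1$ and $q+1$, then apply the rationality criteria developed at the start of \cref{sec:PSL} together with the appropriate character table.

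For $q \equiv 7 \pmod{24}$ one has $q \equiv 3 \pmod{4}$ and $q \equiv 1 \pmod{3}$, so \cref{table-psl(q)34} is the relevant table. The key divisibilities are $6 \mid q-1$ and $8 \mid q+1$, together with $12 \nmid q-1$, $12 \nmid q+1$, $8 \nmid q-1$, and $6 \nmid q+1$. I would first note that $I$ and $T\bigl(\tfrac{q+1}{4}\bigr)$ are rational while $N$ and $N'$ are not (the entries under $\psi_-'$, $\psi_-''$ involve $\sqrt{-q}$). I would then run through the remaining candidate classes $S\bigl(\tfrac{q-1}{j}\bigr)$ and $T\bigl(\tfrac{q+1}{j}\bigr)$ for $j\in\{6,8,12\}$, retaining only those whose index is a positive integer lying in the prescribed range; the dual analysis on the character side (using \cref{lem:rational-conjugacy-values}) picks out the corresponding rational characters.

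The case $q \equiv 17 \pmod{24}$ is completely analogous but uses \cref{table-psl(q)14}, since now $q \equiv 1 \pmod{4}$ and $q \equiv 2 \pmod{3}$. Here the always-rational classes are $I$ and $S\bigl(\tfrac{q-1}{4}\bigr)$, and the same tick list is reassessed against the new divisibility data $8\mid q-1$, $6\mid q+1$ together with the pertinent ``does not divide'' assertions; the symmetric argument produces the rational characters.

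I do not anticipate any substantive obstacle: the character-theoretic content is already in place from \cref{sec:prep-mat} and from the opening of this section, so the proof reduces to bookkeeping. The only care required is to verify that the surviving indices lie in the correct ranges $1\leq a\leq (q-3)/4$ or $1\leq b\leq (q-3)/4$ (respectively the analogous bounds in \cref{table-psl(q)14}), that none of them coincide with the already-enumerated classes $I$, $T\bigl(\tfrac{q+1}{4}\bigr)$, or $S\bigl(\tfrac{q-1}{4}\bigr)$, and that the counts on the conjugacy-class side and the character side indeed match the claimed value.
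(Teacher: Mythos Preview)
Your plan is exactly the paper's approach: translate $q\equiv\pm7\pmod{24}$ into divisibility data for $q\mp1$ and run through the candidate indices $\{(q\mp1)/12,(q\mp1)/8,(q\mp1)/6\}$ on each side.

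There is, however, a genuine problem---one that the paper's own proof shares. You correctly list $8\mid q+1$ among the key divisibilities when $q\equiv 7\pmod{24}$ (and $8\mid q-1$ when $q\equiv 17\pmod{24}$). Carrying your recipe through, this forces $b=(q+1)/8$ onto the admissible list, since $\tfrac{4b}{q+1}=\tfrac12\in\{\tfrac13,\tfrac12,\tfrac23\}$ and $1\le (q+1)/8\le (q-3)/4$ for $q\ge 7$. Thus $T\bigl((q+1)/8\bigr)$ is a rational class in addition to $I$, $T\bigl((q+1)/4\bigr)$ and $S\bigl((q-1)/6\bigr)$; dually, $\psi_{q-1}^{((q+1)/4)}$ is a fourth rational character alongside $\psi_1$, $\psi_q$ and $\psi_{q+1}^{((q-1)/3)}$. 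The mirror phenomenon occurs for $q\equiv 17\pmod{24}$ via $S\bigl((q-1)/8\bigr)$ and $\psi_{q+1}^{((q-1)/4)}$. So the bookkeeping you propose yields four rational classes and four rational characters, not three; this is confirmed by $\PSL_2(7)$, whose rational classes are $1A,2A,3A,4A$. The paper's proof simply omits the contribution coming from $8\mid q\pm1$, and your closing expectation that ``the counts \ldots\ indeed match the claimed value'' would not survive the computation you outline.
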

\begin{proof}
   If one has $q\equiv7\pmod{24}$, then automatically $ q\equiv 3\pmod{4}$, we have the classes $I$, $T\left(\dfrac{q+1}{4}\right)$; and the characters $\psi_1$, $\psi_q$ are rational. In this case we further have that $\dfrac{q-1}{6}$ is an integer and hence the class $S\left(\dfrac{q-1}{6}\right)$, and the character $\psi_{q-1}^{(k)}$ is rational for $k=\dfrac{q-1}{3}$. Note that no other class of the form $S(a)$, $T(b)$ and no other character of the form $\psi_{(q+1)}^{(k)}$ and $\psi_{(q-1)}^{(j)}$ are rational.

   In the next case the classes $I$, $S\left(\dfrac{q-1}{4}\right)$ and $T\left(\dfrac{q+1}{6}\right)$ are rational; moreover the characters $\psi_1$, $\psi_q$ and $\psi_{q-1}^{(j)}$ for $j=\dfrac{q+1}{3}$ are rational. 
\end{proof}
\begin{proof}[Proof of \cref{mainthm}]
    When $q$ is a power of $2$, the result follows from \cref{prop:2}, and when $q$ is a power of $3$, the result follows from \cref{prop:case-3}. Here after assume that $q$ is power of a prime $p>3$.

    When $q=p^{2\ell}$ then the result follows from \cref{prop:evenpower}. Next we consider values of $q$ modulo $24$. Since $q$ is a power of prime, these values are from the set $\{\pm 1, \pm 5, \pm 7\}$. When $q\equiv\pm1\pmod{24}$, the result follows from \cref{q+-1-24} and \cref{q+-1-12}, considering the fact that in this case $q\equiv\pm1\pmod{12}$ as well. 
    The case of $q\equiv\pm5\pmod{24}$ is considered in \cref{q+-5-12}. The analysis for the other two cases is discussed in \cref{q-1-3-4}. This finishes the proof

\end{proof}
\color{black}

\section{Rational conjugacy classes and rational valued characters\\ for ATLAS groups}\label{sec:sporadic}
Using computer algebra software GAP \cite{GAP4} and GAP Package \cite{CTblLib1.3.1}, we computed that the number of rational conjugacy classes and the number of rational valued characters for all simple groups, whose character tables are mentioned in the ATLAS of finite groups \cite{CWCNP2003}, except the Alternating groups and $\PSL_2(q).$
In the following table, we use the standard notations for these Atlas groups, and 
we denote the number of rational conjugacy classes in a group with $c$, and the number of rational valued characters with $r$. From this table, we notice that the number of rational conjugacy classes is same as the number of rational valued characters except the Tits groups. For the Tits Group, the number of rational conjugacy classes is $12$ and the number of rational valued characters is $10.$

\begin{center}
    \begin{longtable}{|c|c|c||c|c|c||c|c|c|}
    \hline
     \text{Group}     &  $c$  & $r$ & \text{Group}     &  $c$  & $r$ & \text{Group}     &  $c$  & $r$\\
     \hline
    $\mathrm{M}_{11}$  &  6 & 6  & $\mathrm{Co}_1$ & 97 & 97 & $\mathrm{Ru}$ & 22 &  22  \\
   \hline
    $\mathrm{M}_{12}$  &  13 & 13 &  $\mathrm{Co}_2$ & 52 & 52 &  $\mathrm{Suz}$ & 31 & 31 \\
   \hline
    $\mathrm{M}_{22}$  &  8 & 8  & $\mathrm{Co}_3$ & 34 & 34 &  $\mathrm{O^\prime N}$  & 15 & 15\\
   \hline
    $\mathrm{M}_{23}$  &  7 & 7  &  $\mathrm{Fi}_{22}$ & 55 & 55 & $\mathrm{HN}$ & 32 & 32\\
   \hline
    $\mathrm{M}_{24}$  &  16 & 16  & $\mathrm{Fi}_{23}$ & 86 & 86 & $\mathrm{Ly}$ & 29 & 29 \\
   \hline
    $\mathrm{J}_{1}$  &  6 & 6  & $\mathrm{Fi}_{24}$ & 167 & 167 &  $\mathrm{Th}$ & 30 & 30\\
   \hline
     $\mathrm{J}_{2}$  &  11 & 11 & $\mathrm{HS}$  &  20 & 20 & $\mathrm{B}$  & 166 & 166\\
   \hline
   \text{Group}     &  $c$  & $r$ & \text{Group}     &  $c$  & $r$ & \text{Group}     &  $c$  & $r$\\
     \hline
     $\mathrm{J}_{3}$  &  8 &  8 & $\mathrm{McL}$ & 12 & 12 & $\mathrm{M}$ & 150 & 150 \\
   \hline
   $\mathrm{J}_4$ & 29 & 29 &  $\mathrm{He}$ & 17 & 17 &   {\bf $\mathrm{T}$} & {\bf 12} & {\bf 10} \\
   \hline
   $\PSL_3(3)$ & 6 & 6 & $\mathrm{PSU}_3(3)$ & 6 & 6 & $\mathrm{PSL}_3(4)$ & 6 & 6\\
   \hline
   $\mathrm{PSU}_4(2)$ & 10 & 10 & $\mathrm{PSL}_3(5)$ & 8 & 8 & $\mathrm{PSp}_4(4)$ & 11 & 11 \\
    \hline
   $\mathrm{PSU}_3(4)$ & 4 & 4 & $\mathrm{PSU}_3(5)$ & 10 & 10 & $\mathrm{Sz}(8)$ & 3 & 3 \\
 \hline
   $\mathrm{PSp}_6(2)$ & 30 & 30 & $\mathrm{PSL}_3(7)$ & 10 & 10 & $\mathrm{PSU}_4(3)$ & 14 & 14 \\
    \hline
   $\mathrm{G}_2(3)$ & 19 & 19 & $\mathrm{PSp}_4(5)$ & 17 & 17 & $\mathrm{PSU}_3(8)$ & 6 & 6 \\
    \hline
   $\mathrm{PSU}_3(7)$ & 8 & 8 & $\mathrm{PSL}_4(3)$ & 23 & 23 & $\mathrm{PSL}_5(2)$ & 13 & 13 \\
    \hline
   $\mathrm{PSU}_5(2)$ & 13 & 13 & $\mathrm{PSL}_3(8)$ & 4 & 4 & $\mathrm{Sz}(32)$ & 3 & 3 \\
    \hline
   $\mathrm{PSL}_3(9)$ & 6 & 6 & $\mathrm{PSU}_3(9)$ & 6 & 6 & $\mathrm{PSU}_3(11)$ & 10 & 10 \\
    \hline
   $\mathrm{P}\Omega^+_8(2)$ & 53 & 53 & $\mathrm{P}\Omega^-_8(2)$ & 27 & 27 & $^3\mathrm{D}_4(2)$ & 14 & 14 \\
   \hline
   $\mathrm{PSL}_3(11)$ & 8 & 8 & $\mathrm{G}_2(4)$ & 14 & 14 & $\mathrm{P}\Omega_7(3)$ & 54 & 54 \\
    \hline
   $\mathrm{PSp}_6(3)$ & 24 & 24 & $\mathrm{G}_2(5)$ & 31 & 31 & $\mathrm{PSU}_6(2)$ & 34 & 34 \\
   \hline
   $\mathrm{R}(27)$ & 5 & 5 & $\mathrm{PSp}_8(2)$ & 79 & 79 & $\mathrm{P}\Omega^+_8(3)$ & 112 & 112 \\
   \hline

    $\mathrm{P}\Omega^-_8(3)$ & 82 & 82 & $\mathrm{P}\Omega^+_{10}(2)$ & 69 & 69 & $\mathrm{P}\Omega^-_{10}(2)$ & 69 & 69\\
     \hline
   $\mathrm{F}_4(2)$ & 89 & 89 & $^2\mathrm{E}_6(2)$ & 108 & 108 & $\mathrm{E}_6(2)$ & 80 & 80 \\
    \hline
    \caption{List of number of rational conjugacy classes and rational valued characters for finite simple groups from the ATLAS }
    \end{longtable}
\end{center}

\printbibliography
\end{document}